\theoremstyle{plain}
\newtheorem{theorem}{Theorem}[section]
\newtheorem{lemma}[theorem]{Lemma}
\newtheorem{proposition}[theorem]{Proposition}
\newtheorem{example}[theorem]{Example}
\theoremstyle{definition}
\newtheorem{definition}[theorem]{Definition}
\newtheorem*{remark}{Remark}
\numberwithin{equation}{section}
\begin{document}

\title{A Bijection Between Partially Directed Paths in the Symmetric Wedge  and Matchings}
\author{Svetlana~Poznanovi\'{c}}
\date{}

\maketitle

\begin{abstract}
We give a bijection between partially directed paths in the
symmetric wedge $y= \pm x$ and matchings, which sends north steps
to nestings. This gives a bijective proof of a result of Prellberg
et al. that was first discovered through the corresponding
generating functions: the number of partially directed paths
starting at the origin confined to the symmetric wedge $y= \pm x$
with $k$ north steps is equal to the number of matchings on $[2n]$
with $k$ nestings.
\end{abstract}

{\bf Key Words:} partially directed path, matching, nesting

{\bf AMS subject classification:} 05A15, 05A18

\section{Introduction}

The purpose of this paper is to give a bijective proof of a fact
that was discovered unexpectedly and connects two seemingly
different branches of combinatorics. One of the branches is the
study of matchings and set partitions and, more specifically, the
statistics crossings and nestings. The other one is the study of
directed paths in the plane.

Based on Touchard's work~\cite{Touchard}, Riordan~\cite{Riordan}
derived a formula for the number of matchings with $k$ crossings.
Since then, a lot of results connected to this topic have been
obtained. We mention a few. M.~de Sainte-Catherine
in~\cite{Sainte-Catherine} bijectively shows that the number of
matchings with $k$ crossings is equal to the number of matchings
with $k$ nestings. This bijection also implies symmetric joint
distribution of crossings and nestings. More than two decades
later, Kasraoui and Zeng, in~\cite{Kasraoui}, extended this
bijection to show that the same result holds for set partitions.
Martin Klazar~\cite{Klazar06} studied the distribution of these
statistics on subtrees of the generating tree of matchings, and
the same questions for set partitions were studied in~\cite{PYan}.

In another line of work, Prellberg et al. in~\cite{Rpr} worked on
founding a generating function of self-avoiding partially directed
paths in the wedge $y=\pm px$ consisting of east, north and south
steps. Using the kernel method they were able to derive explicitly
the generating function for the case $p=1$. The generating
function revealed that the number of such paths which end at
$(n,-n)$ with $k$ north steps is the same as the number of
matchings on $[2n]$ with $k$ nestings. For a nice survey of the
history of the problem and how this fact was discovered
see~\cite{Rubey}.

A matching on the set $[2n]=\{1,\dots,2n\}$ is a family of $n$
two-element disjoint subsets of $[2n]$. In particular, it is a
set-partition with all the blocks of size two. It is convenient to
represent a matching with its standard diagram consisting of arcs
connecting $2n$ vertices on a horizontal line (see Figure
\ref{fig:matching}). The vertices are numbered in increasing order
from left to right. The set of all matchings of $[2n]$ is denoted
by $\mathcal{M}_n$. We say that two edges $(a,b)$ and $(c,d)$ form
a crossing if $a<c<b<d$ (i.e. if the cross) and they form a
nesting if $a<c<d<b$ (i.e. if one covers the other). If they are
neither crossed nor nested we say they form an alignment. The
number of nestings in a matching $M$ is denoted by $ne(M)$.
\\
\\

\begin{figure}[h]
\begin{center}
\includegraphics[width=8cm]{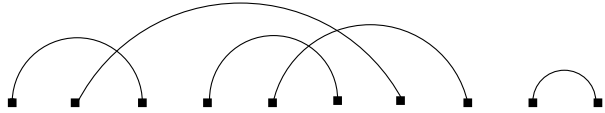}
\end{center}
\caption{\emph{Diagram of a matching with 10 vertices and edges:
$e_1=(1,3),e_2=(2,7),e_3=(4,6),e_4=(5,8)$, and $e_5=(9,10)$. This
matching has 3 crossings formed by the pairs of edges:
$(e_1,e_2),(e_2,e_4),$ and $(e_3,e_4)$, one nesting  $(e_2,e_3)$,
and all the other pairs of edges form alignments.}}
\label{fig:matching}
\end{figure}

A partially directed path in the plane is a path starting at the
origin and consisting of unit east, north, and south steps. We
consider all such paths confined to the symmetric wedge defined by
the lines $y=\pm x$. Let $\mathcal{P}_n$ be the set of all such
paths  ending at the line $y=-x$ with $n$ horizontal steps.

\begin{theorem}
There is a bijection $\Phi:\mathcal{P}_n \rightarrow
\mathcal{M}_n$ that takes the number of north steps of $P \in
\mathcal{P}_n$ to the number of nestings of $\Phi(P)$.
\end{theorem}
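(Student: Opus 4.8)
The plan is to build $\Phi$ by scanning the path once from left to right, translating each step into an operation on a growing matching, and to invert this by a matching analogue. I would first record the shape of a path $P\in\mathcal P_n$: being partially directed and self-avoiding, $P$ is a word $E\,V_1\,E\,V_2\cdots E\,V_n$ in which each $V_i$ is a (possibly empty) monotone run of north steps, or of south steps, performed in the column $x=i$; writing $h_i$ for the height reached at the end of $V_i$, confinement to the wedge is exactly $h_0=0$ and $|h_i|\le i$ for every $i$, the endpoint condition is $h_n=-n$, and the number of north steps is $\#N(P)=\sum_{i=1}^{n}\max(0,h_i-h_{i-1})$. In particular each $h_i$ with $1\le i\le n-1$ is a free choice from a set of size $2i+1$, so $|\mathcal P_n|=\prod_{i=1}^{n-1}(2i+1)=(2n-1)!!=|\mathcal M_n|$; thus the counting is automatic and the theorem is entirely a matter of exhibiting $\Phi$ together with the statistic identity.

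On the matching side I would use the standard ``history'' description of $M$: reading the vertices $1,2,\dots,2n$ from left to right while keeping track of the set of currently open arcs, every vertex is an opener or a closer, and a closer that closes the arc of $t$-th smallest left endpoint among the $p$ arcs then open creates exactly $t-1$ nestings (and $p-t$ crossings); hence $ne(M)$ is the sum, over closers, of $t-1$. The idea of $\Phi$ is to let the columns of $P$ drive such a scan: each east step opens a new arc, the descending part of a run $V_i$ performs a block of closures, and a north step defers a closure past an older open arc, which is precisely the move that produces a nesting. The target is that a run $V_i$ contributes $\max(0,h_i-h_{i-1})$ to the running nesting total, so that summing over $i$ yields $\#N(P)=ne(\Phi(P))$. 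A useful sanity check before attempting the general rule: the paths with no north step are exactly the monotone $E/S$ ``staircases'', there are $C_n$ of them, and the procedure should match them with the $C_n$ non-nesting matchings.

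The real work is in pinning down the local rule that turns a run $V_i$ into opener/closer moves so that the output is always a genuine matching — never closing an arc when none is open and finishing with all arcs closed, which should be forced by the conditions $|h_i|\le i$ and $h_n=-n$ — and so that the whole procedure is reversible. For $\Phi^{-1}$ I would scan $M$, emit an east step at each opener, and convert each closer (together with its rank $t$ among the $p$ open arcs) into the appropriate amount of upward and then downward motion, reconstructing the sequence $(h_i)$ and checking that it stays within the wedge. I expect the main obstacle to be exactly this compatibility of constraints — the ``$1\le t\le p$'' freedom at a closer of $M$ must correspond precisely to the freedom $|h_i|\le i$ on the path side — and the verification that $\Phi$ is a bijection and that $\#N(P)=ne(\Phi(P))$ will have to be done together rather than separately; the difficulty is careful bookkeeping rather than any single hard idea.
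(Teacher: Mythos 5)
Your setup is sound as far as it goes: the parametrization of $P$ by the heights $h_i$, the count $|\mathcal P_n|=\prod_{i=1}^{n-1}(2i+1)=(2n-1)!!=|\mathcal M_n|$, the formula $\#N(P)=\sum_i\max(0,h_i-h_{i-1})$, and the observation that a closer which closes the arc of $t$-th smallest left endpoint among the $p$ open arcs contributes exactly $t-1$ nestings are all correct, and your sanity check (staircase paths $\leftrightarrow$ non-nesting matchings) agrees with what the paper proves. But the theorem is an existence statement about a specific statistic-preserving bijection, and your proposal never defines the map: you explicitly defer ``the real work'' of pinning down the local rule that converts a run $V_i$ into opener/closer moves, and that rule is precisely where all the difficulty lives. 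The equinumerosity $(2n-1)!!=(2n-1)!!$ is automatic and buys nothing toward matching the statistics. Moreover, your intended dictionary (east step $=$ opener, descending part of $V_i$ $=$ block of closers, north step $=$ deferred closure) faces a concrete mismatch you do not resolve: the choice at column $i$ is a single number $h_i$ with $2i+1$ values, while the choices in the matching history are one rank $t\in\{1,\dots,p\}$ per closer, and a single column may have to account for several closers or none; you give no rule for how one integer $h_i$ distributes into several ranks $t$, nor why the wedge constraint $|h_i|\le i$ matches the constraint $1\le t\le p$ closer by closer.

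For comparison, the paper does not attempt a single left-to-right scan. It factors $\Phi=\phi\circ\psi$: the map $\psi$ reads the east steps of $P$ \emph{from right to left} and uses each one to draw one whole edge (connecting the leftmost free vertex to the $b_i$-th free vertex to its right, where $b_i=a_{n+1-i}+n+1-i$), so that the $2i-1$ choices per east step match the free vertices exactly; but the north steps then correspond not to nestings but to an auxiliary statistic $st(M)$ measured between consecutive edges. The entire substance of the proof is the second map $\phi$, a recursively defined bijection of $\mathcal M_n$ onto itself with a three-way case analysis (first two edges aligned, crossed, or nested) that rearranges the edges crossing the first edge so as to convert $st$ into $ne$. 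Nothing in your proposal anticipates the need for this second stage or supplies an alternative to it, so as it stands the argument has a genuine gap at its center: the bijection whose existence is being asserted is not constructed.
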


\begin{remark} While preparing the present paper, we found out about the very recent work of Martin
Rubey~\cite{Rubey} in which he presents a bijective proof of the
same result. However, our bijection is different from Rubey's, as
illustrated in Example \ref{T:phi}. In particular, $\Phi$ may be
of special interest in the study of matchings because a key part
of it is a bijection on matchings which, unlike the other
bijections used in the literature, does not preserve the type of
the matching, i.e., the sets of minimal and maximal elements of
the blocks. This may give further insight into the interaction
between matchings of different type when various statistics of
matchings are studied.
\end{remark}

\section{Definition and properties of the bijection $\Phi$}

Below we define a bijection $\Phi:\mathcal{P}_n \rightarrow
\mathcal{M}_n$ that takes the number of north steps of $P \in
\mathcal{P}_n$ to the number of nestings of $\Phi(P)$. The map
$\Phi$ is defined as the composition of two maps: $\Phi=\phi \circ
\psi$, where $\psi:\mathcal{P}_n \rightarrow \mathcal{M}_n$ and
$\phi:\mathcal{M}_n \rightarrow \mathcal{M}_n$.

\subsection{Bijection  $\psi$ from $\mathcal{P}_n$ to $\mathcal{M}_n$}

Every path $P \in \mathcal{P}_n$ is determined by the
$y$-coordinates of its east steps, i.e., a sequence $a_1, \dots,
a_n$ of integers such that $ -(i-1) \leq a_i \leq i-1$. Set
$b_i=a_{n+1-i}+n+1-i$. Note that $ 1 \leq b_i \leq 2(n+1-i)-1$.
Define a matching $M$ on $[2n]$  by connecting the first available
vertex from the left to the $b_i$-th available vertex to its
right, one by one for each $i=1,\dots, n$ in that order. Note that
before the $i$-th step there are $2(n+1-i)$ vertices that are not
connected yet, so each step is possible. We define $\psi(P)=M$. It
is not hard to see that knowing M, one can reverse the steps one
by one and find the $b_i$'s, which determine a path $P$. So $\psi$
is a bijection. Figure \ref{fig:mappsi} shows a path $P \in
\mathcal{P}_7$ and $\psi(P)$.

\begin{definition} Let $M \in \mathcal{M}_n$.
Suppose the edges $e_1,\dots,e_n$ of $M$ are ordered according to
their left endpoints in ascending order. Suppose $e_i=(a,b)$ and
$e_{i+1}=(c,d)$. Define
\[st_i(M):=
\begin{cases}
\left| \{v: d \leq v \leq b, v \text{ is a vertex of } e_k, k> i
\} \right|, &\text{if $e_i$
and $e_{i+1}$ are nested}\\
0,  &\text{otherwise}
\end{cases}
\]
and
\[st(M)=\sum_{i=1}^{n-1}st_i(M).\]
\end{definition}

\begin{figure}[h]
\begin{picture}(100,-120)
\put(90,-120){\includegraphics[height=4cm,width=17cm]{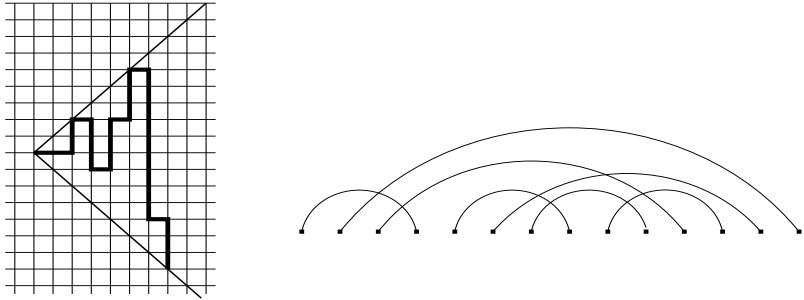}}
\end{picture}
\vspace{4cm} \caption{\emph{Path $P \in \mathcal{P}_7$ and the
corresponding matching $\psi(P).$}}
\label{fig:mappsi}
\end{figure}
\smallskip

\begin{lemma} The number of north steps of $P$ is equal to
$st(\psi(P))$.
\end{lemma}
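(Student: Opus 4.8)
The plan is to evaluate both quantities explicitly in terms of the sequence $a_1,\dots,a_n$ defining $P$ and to check that they agree. From the description of $P$ by the heights of its east steps, the vertical run between the $m$-th and $(m+1)$-th east steps consists of $\max(a_{m+1}-a_m,0)$ north steps, while there are no north steps before the first east step (it sits at height $a_1=0$) or after the last one (which is a descent to the endpoint on $y=-x$, since $a_n\ge-(n-1)>-n$). Hence the number of north steps of $P$ equals $\sum_{m=1}^{n-1}\max(a_{m+1}-a_m,0)$, and it remains to show that $st(\psi(P))$ equals the same sum.

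Set $M=\psi(P)$. The first thing to observe is that $\psi$ produces the edges of $M$ in increasing order of their left endpoints: at each step it matches the leftmost unused vertex, which is then deleted together with its partner, so the left endpoints chosen are strictly increasing. Thus the order in which $\psi$ builds the edges is exactly the order $e_1,\dots,e_n$ used in the definition of $st$. Let $V_i$ denote the set of vertices still unmatched just before the $i$-th step, so $|V_i|=2(n+1-i)$, $\;V_i=e_i\cup e_{i+1}\cup\dots\cup e_n$, and $V_{i+1}=V_i\setminus e_i$; writing $v^{(i)}_1<v^{(i)}_2<\cdots$ for the elements of $V_i$ in increasing order, we have $e_i=\bigl(v^{(i)}_1,\,v^{(i)}_{b_i+1}\bigr)$.

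The key step is to identify $st_i(M)$ in terms of $b_i$ and $b_{i+1}$. Write $e_i=(a,b)$ and $e_{i+1}=(c,d)$. If $b_i\ge 2$ then $c=v^{(i)}_2$, and chasing the indices through $V_{i+1}=V_i\setminus\{v^{(i)}_1,v^{(i)}_{b_i+1}\}$ shows that $d=v^{(i)}_{b_{i+1}+2}$ exactly when $b_{i+1}\le b_i-2$, in which case $a<c<d<b$, so $e_i$ and $e_{i+1}$ nest; otherwise (including the case $b_i=1$, which cannot occur together with $b_{i+1}\le b_i-2$ because $b_{i+1}\ge 1$) they do not nest and $st_i(M)=0$. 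When they nest, $st_i(M)$ counts the vertices of $e_{i+1}\cup\dots\cup e_n=V_{i+1}$ lying in $[d,b]$; in $V_i$ these are $v^{(i)}_{b_{i+1}+2},\dots,v^{(i)}_{b_i+1}$, and exactly one of them, namely $v^{(i)}_{b_i+1}=b$, fails to lie in $V_{i+1}$. Hence $st_i(M)=b_i-b_{i+1}-1$ in the nesting case, and in every case $st_i(M)=\max(b_i-b_{i+1}-1,\,0)$. I expect this case analysis with the unmatched-vertex sets to be the part requiring the most care; the rest is substitution.

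Finally, since $b_i=a_{n+1-i}+(n+1-i)$ we obtain $b_i-b_{i+1}-1=a_{n+1-i}-a_{n-i}$, so $st_i(M)=\max(a_{n+1-i}-a_{n-i},0)$. Summing over $i=1,\dots,n-1$ and substituting $m=n-i$ gives $st(\psi(P))=\sum_{m=1}^{n-1}\max(a_{m+1}-a_m,0)$, which is precisely the number of north steps of $P$ computed in the first paragraph. This proves the lemma.
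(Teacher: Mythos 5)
Your proof is correct and follows essentially the same route as the paper's: both reduce the lemma to the identity $st_i(M)=b_i-b_{i+1}-1$ when $b_i\ge b_{i+1}+2$ and $st_i(M)=0$ otherwise, and both obtain it by tracking the unmatched vertices at each stage of the construction of $\psi(P)$. Your version just carries out the index-chasing with the sets $V_i$ more explicitly than the paper does.
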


\begin{proof}

Let $M=\psi(P)$. The number of north steps of $P$ is
\begin{eqnarray}
\sum_{a_{i+1} > a_i}{(a_{i+1}-a_i)} =\sum_{b_{n-i} \geq
b_{n-i+1}+2}{(b_{n-i}-b_{n-i+1}-1)}
\end{eqnarray}
So, it suffices to show that
\begin{eqnarray}
st_i(M)=
\begin{cases}
b_i-b_{i+1}-1, \; &\text{if $b_{i} \geq b_{i+1}+2$} \\
0, &\text{otherwise}
\end{cases}
\end{eqnarray}
 After the $i$-th edge $e_i$ is
drawn in the construction of $M$, there are $b_i-1$ unconnected
vertices below it. In the case $b_{i} \geq b_{i+1}+2$, we have
$b_i-1 \geq b_{i+1}+1$ which implies $e_{i+1}$ is nested below
$e_i$ and $st_i(M)=b_i-b_{i+1}-1$. In the other case, when $b_{i}
< b_{i+1}+2$, we have $b_i-1 < b_{i+1}+1$ and hence the edge
$e_{i+1}$ and $e_i$ are crossed (if $b_i>1$) or aligned (if
$b_i=1$). In either case, $st_i(M)=0$.
\end{proof}

\subsection{Bijection  $\phi$ from $\mathcal{M}_n$ to $\mathcal{M}_n$}
 We describe $\phi$ by a series of transformations on the diagrams of the matchings.
 This map preserves the first edge. For $M \in
\mathcal{M}_n$, $N=\phi(M)$ is constructed inductively as follows.
If $n=1$ set $\phi(M)=M$. If $n>1$, let $M_1$ be the matching
obtained from $M$ by deleting its first edge $e_1=(1,r)$ and let
$N_1=\phi(M_1)$. Let $N_2$ be the matching obtained by adding back
the edge $e_1$ in the same position as it was in $M$. Denote by
$e_2$ the second edge of $N_2$ (which was also the second edge of
$M$). There are three cases:
\begin{itemize}
\item [\emph{\textbf{case 1:}}] $e_1$ and $e_2$ were aligned

In this case set $N=\phi(M)=N_2$.

\item [\emph{\textbf{case 2:}}] $e_1$ and $e_2$ were crossed

Let $f_2=e_2 =(l_2,r_2), f_3=(l_3,r_3), \dots, f_k=(l_k,r_k)$ be
the edges in $N_2$ crossing $e_1$ ordered by their left endpoints
$2=l_2<l_3<\cdots <l_k$. Rearrange them in the following way:
connect $r_2$ to $l_3$, $r_3$ to $l_4, \dots, r_{k-1}$ to $l_k$.
Finally, insert one additional vertex right before $r$ and connect
it to $r_k$. Delete the vertex $l_2$ and renumber the remaining
vertices (see Figure~\ref{fig:case2}). Note that the position of
the first edge in the matching $N$ obtained this way is the same
as in $M$. Set $\phi(M)=N$.

\begin{figure}[h]
\begin{center}
\includegraphics[height=2cm]{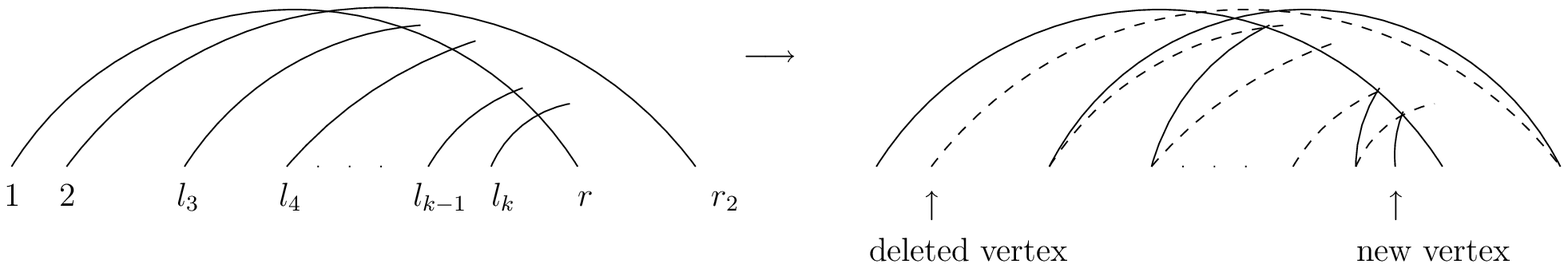}
\end{center}
\caption{\emph{Definition of $\phi$ when $e_1$ and $e_2$ are
crossed. Dashed lines are used to represent edges whose left
endpoints have been changed.}} \label{fig:case2}
\end{figure}

\item [\emph{\textbf{case 3:}}] $e_1$ and $e_2$ were nested

In $N_2$, let $f_1 =(l_1,r_1), \dots, f_p=(l_p,r_p)$ be the edges
crossing both $e_1=(1,r)$ and $e_2=(2,q)$, and let $f_{p+1}
=(l_{p+1},r_{p+1}), \dots, f_{p+s}=(l_{p+s},r_{p+s})$ be the edges
crossing $e_1$ but not $e_2$, such that $l_1<\cdots
<l_{p}<q<l_{p+1}< \cdots <l_{p+s}$. For easier notation let
$\{l_1<\cdots <l_{p}<q<l_{p+1}< \cdots <l_{p+s}\}=\{v_1<\cdots
<v_{p}<v_{p+1}<v_{p+2}< \cdots <v_{p+s+1}\}$. Add one vertex right
before $r$ and connect it to $v_{s+1}$. "Rearrange" the edges
$f_1, \dots, f_{p+s}$ so that $r_1,\dots,r_{p+s}$ are connected to
$v_1,\dots,v_s,v_{s+2},\dots,v_{p+s+1}$ in that order.  Finally,
delete the vertex 2 and renumber the remaining vertices. See
Figure~\ref{fig:case3} for an illustration when $p=3$ and $s=2$.
Call the matching obtained this way $N$. The first edge of $N$ is
the same as in $M$. Set $\phi(M)=N$.
\end{itemize}

\begin{figure}[h]
\begin{center}
\includegraphics[height=3cm]{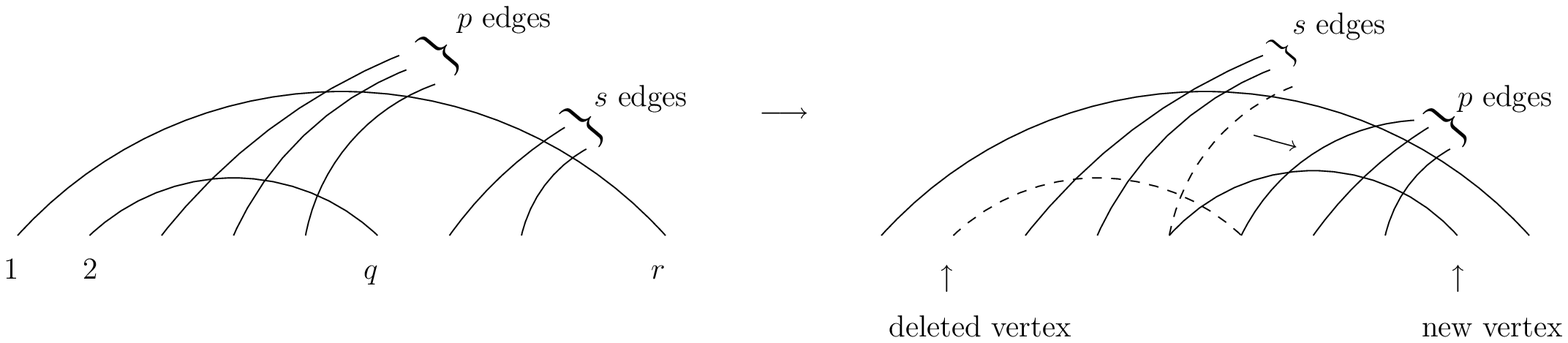}
\end{center}
\caption{\emph{Example of case 3 for $p=3$ and $s=2$. }}
\label{fig:case3}
\end{figure}

\begin{example}\label{T:phi}
Figure~\ref{fig:fi} shows step-by-step construction of $\phi(M)$
for the matching $M$ from Figure~\ref{fig:mappsi}. So, for the
path $P$ given in Figure~\ref{fig:mappsi}, the corresponding
matching is
$\Phi(P)=\{(1,4),(2,14),(3,12),(5,8),(6,9),(7,11),(10,13)\}$. Note
that the image of $P$ under Rubey's bijection defined
in~\cite{Rubey} is
$\{(1,4),(2,14),(3,11),(5,8),(6,9),(7,13),(10,12)\}$. Hence the
two bijections are different.
\end{example}

\begin{figure}[h]
\begin{center}
\includegraphics[width=13cm]{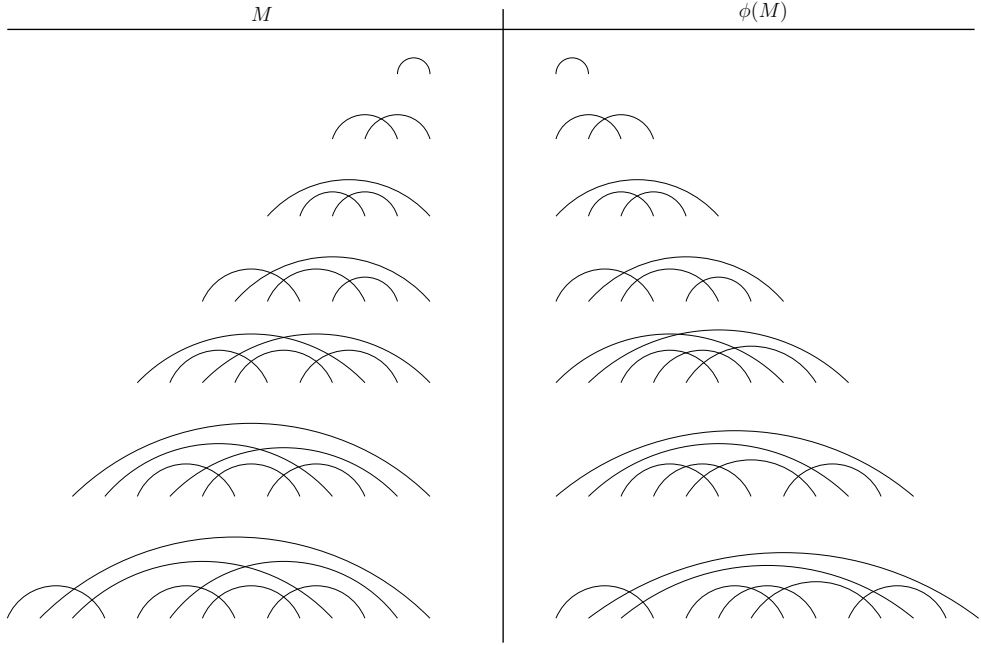}
\end{center}
\caption{\emph{Example of construction of $\phi(M)$}}
\label{fig:fi}
\end{figure}

\begin{theorem}
The map $\phi$ is a bijection and $ne(\phi(M))=st(M)$.
\end{theorem}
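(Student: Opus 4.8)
The plan is to prove both statements simultaneously by induction on $n$, mirroring the recursive definition of $\phi$. The base case $n=1$ is immediate since $\phi$ is the identity and $st(M)=ne(M)=0$. For the inductive step, assume $\phi$ is a bijection on $\mathcal{M}_{n-1}$ with $ne(\phi(M_1))=st(M_1)$ for all $M_1$, and consider $M\in\mathcal{M}_n$ with first edge $e_1=(1,r)$. Write $M_1$ for $M$ with $e_1$ deleted, $N_1=\phi(M_1)$, $N_2$ for $N_1$ with $e_1$ reinserted in its original position, and $N=\phi(M)$ obtained from $N_2$ by the case-appropriate rearrangement. The first task is to check that $\phi$ is well-defined and bijective: in each of the three cases one must verify that the rearrangement produces a legitimate matching on $[2n]$ whose first edge sits in the same position as in $M$, and then exhibit the inverse. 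Since the rearrangement in each case only redistributes the left endpoints of the edges crossing $e_1$ (plus bookkeeping of one added and one deleted vertex), the inverse is determined by reading off which edges cross the first edge of $N$ and undoing the cyclic shift of left endpoints; invertibility of $\phi$ then follows from invertibility of $\phi$ on $\mathcal{M}_{n-1}$ together with invertibility of each local move. I would present this case by case, using the figures as a guide.

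The heart of the argument is the nesting count. I would first establish the identity
\begin{equation}
ne(N_2)=ne(N_1)+c_1,
\end{equation}
where $c_1$ counts the edges of $N_2$ that nest with $e_1$ — but in fact, by construction of $N_2$, the only possible new nesting involving $e_1$ is with $e_2$, and reinserting $e_1$ in its old position does not disturb any nesting among the other edges; so $ne(N_2)=ne(N_1)$ in cases 1 and 2, and $ne(N_2)=ne(N_1)+(\text{nestings of }e_1\text{ with later edges})$ in case 3. Next, and this is the crux, I would compute $ne(N)-ne(N_2)$ in each case and match it against $st(M)-st(M_1)=st_1(M)$, using the Lemma's description: $st_1(M)$ is $0$ unless $e_1$ and $e_2$ are nested in $M$, in which case it equals the number of vertices $v$ with $d\le v\le b$ (where $e_2=(c,d)$, $e_1=(1,b)$) belonging to later edges. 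In case 1 (aligned) one needs $ne(N)=ne(N_1)$ and $st_1(M)=0$, both clear. In case 2 (crossed), the cyclic rearrangement of left endpoints $r_2\to l_3,\dots$ together with the inserted vertex before $r$ is designed precisely so that $e_1$ now nests over exactly the edges $f_3,\dots,f_k$ and the new short edge, contributing $k-1$ new nestings — and one must check this equals the change coming from deleting $l_2$ and passing from $M_1$ to $M$; here I expect the careful accounting of how many edges cross $e_1$ versus $e_2$ in $M$ to be needed.

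The genuinely delicate case is case 3 (nested), where the rearrangement sends $r_1,\dots,r_{p+s}$ to $v_1,\dots,v_s,v_{s+2},\dots,v_{p+s+1}$ and inserts a new vertex before $r$ connected to $v_{s+1}$. I would argue that this move converts the configuration in which $f_{p+1},\dots,f_{p+s}$ cross $e_1$ but lie to the right of $e_2$'s right endpoint into one in which they nest appropriately, so that the net change in nestings is exactly the number of vertices of later edges lying between $d$ and $b$ — i.e. $st_1(M)$ — while simultaneously the passage $N_1\to N_2$ already accounted for the nesting of $e_1$ over $e_2$ and its interior. I anticipate the main obstacle is bookkeeping: one must track, across the deletion/reinsertion and the two vertex modifications, exactly which pairs among $\{e_1\}\cup\{f_i\}\cup\{\text{other edges}\}$ change their relationship (nesting vs.\ crossing vs.\ alignment), and confirm that all changes outside the intended ones cancel. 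I would organize this by isolating the edges incident to the affected vertices, checking that edges disjoint from the interval $[2,r]$ are untouched, and then verifying the local count in the three cases against the formula for $st_1(M)$ from the Lemma. Once $ne(N)-ne(N_2)$ and $ne(N_2)-ne(N_1)$ are both pinned down, adding the inductive hypothesis $ne(N_1)=st(M_1)$ and summing telescopically gives $ne(N)=st(M)$, completing the induction.
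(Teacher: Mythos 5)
Your overall strategy---induction on $n$ following the recursive definition, reducing to $ne(N)=ne(N_1)+st_1(M)$ and verifying it case by case by tracking the two transitions $N_1\to N_2$ and $N_2\to N$---is exactly the paper's, and your treatment of bijectivity and of case 1 is fine. But the concrete claims you make about the nesting counts in case 2 are wrong. First, it is not true that ``the only possible new nesting involving $e_1$ is with $e_2$,'' nor that $ne(N_2)=ne(N_1)$ when $e_1$ and $e_2$ cross: since $e_1=(1,r)$ is reinserted as the outermost edge starting at $1$, every edge of $N_1$ whose span lies inside $(1,r)$ becomes nested below it, so $ne(N_2)=ne(N_1)+ne(e_1,N_2)$, where $ne(e_1,N_2)$ counts \emph{all} edges under $e_1$ and is typically positive in case 2 (e.g.\ $M=\{(1,5),(2,6),(3,4)\}$ gives $ne(N_2)=2$ but $ne(N_1)=1$). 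Second, the rearrangement does not make ``$e_1$ nest over exactly $f_3,\dots,f_k$ and the new short edge'': the rearranged edges keep their right endpoints $r_2,\dots,r_k>r$, so they all still cross $e_1$, and no such $k-1$ nestings are created. As stated, your two claims would give $ne(N)=ne(N_1)+k-1$, contradicting the target $ne(N)=ne(N_1)$. The actual mechanism is the reverse of the one you describe: shifting the left endpoint of each $f_i$ from $l_i$ to (essentially) $l_{i+1}$ makes $f_i$ stop covering precisely the edges below $e_1$ with left endpoints in the intervening range, and these losses telescope to exactly $ne(e_1,N_2)$, cancelling the gain from reinserting $e_1$.

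In case 3 your argument is essentially a restatement of the goal: you assert that the net change ``is exactly the number of vertices of later edges lying between $d$ and $b$, i.e.\ $st_1(M)$,'' but that is the identity to be proved, and no mechanism is given. Note also that $st_1(M)$ is defined on $M$, while the rearrangement acts on $N_2$, which is $e_1$ and $e_2$ reattached to $\phi(M_1)$, whose configuration of edges under $e_1$ need not coincide with that of $M$; the paper bridges this gap by expressing $st_1(M)=1+a+2b+s$ with $a$, $b$, $s$ read off from $N_2$, and then splits into the subcases $s\ge p$ and $p>s$, where the per-edge changes $ne(r(f_i),N)-ne(f_i,N_2)$ take genuinely different forms. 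That bookkeeping, which you defer, is the substance of the proof, and the incorrect picture of how nestings move in case 2 indicates the deferred computation would not come out right without being reworked.
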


\begin{proof} To show that $\phi$ is bijective, we explain how to define the inverse map.  Note that the matching resulting from case 1 above
has the property that its first edge is (1,2). In the matching
resulting from case 2 (case 3 respectively), the vertex preceding
the right endpoint of the first edge $e_1$ is a left endpoint
(right endpoint respectively) of an edge different than $e_1$.
Since all the steps in the definition of $\phi$ are invertible, we
simply perform the inverse steps of the corresponding case.

It is left to prove $ne(\phi(M))=st(M)$. For shortness, for any
matching $M$, let $ne(e,M)$ denote the number of edges in $M$
below the edge $e$. Let $M$, $M_1$, $N_1$, $N_2$, and $N$ be the
same as in the definition of $\phi$. By inductive hypothesis,
$ne(N_1)=st(M_1)=st(M)-st_1(M)$. So we just need to prove
\begin{equation}\label{E:need}
ne(N)=ne(N_1)+st_1(M)
\end{equation}
It is clear that
\begin{equation} \label{E:clear}
ne(N_2)=ne(N_1)+ ne(e_1,N_2)
\end{equation}

In the first case of the definition of $\phi$, ~\eqref{E:need}
clearly follows since $st_1(M)=0$ and we do not add nestings to
$N_1$ by adding back $e_1$.

In the second case, $st_1(M)=0$, so we need to show that
$ne(N)=ne(N_1)$. To this end, if $e$ is an edge in $N_2$ different
from $f_2,\dots, f_k$ (notation from the definition of $\phi$),
let $r(e)$ be the edge in $N$ that corresponds to $e$ in the
obvious way, and let $r(f_i)$ be the edge with right endpoint
$r_i$, for $i=2, \dots, k$.  It is clear that
$ne(e,N_2)=ne(r(e),N)$
 for any edge $e \notin \{e_1,f_2,\dots, f_k\}$. Note that the left endpoint of $r(f_i)$ in $N$ is $l_{i}-1$ because the vertex $2$ from $N_2$ was deleted (see Figure~\ref{fig:case2}).
 So, for $2 \leq i <
 k$
 \begin{align}
  ne(f_i,N_2)&-ne(r(f_i),N)= \notag \\
 &=\left|\{\text{edges in $N$ below $e_1$ with left endpoint between $l_i-1$ and $l_{i+1}-1$}\}\right | \label{E:ne1}\\
  ne(f_k,N_2)&-ne(r(f_k),N)= \notag \\
 &=\left |\{\text{edges in $N$ below $e_1$ with left endpoint between $l_k-1$ and
 $r$}\}\right | \label{E:ne2}
 \end{align}
 By subtracting the following equalities
 \begin{align}
 ne(N_2)&=\sum_{i=2}^k{ne(f_i,N_2)}+\sum_{e \notin \{f_2,\dots,
 f_k\}}{ne(e,N_2)} \label{E:N2} \\
 ne(N)&=\sum_{i=2}^k{ne(r(f_i),N)}+\sum_{e \notin \{f_2,\dots,
 f_k\}}{ne(r(e),N)} \label{E:N}
 \end{align}
 and using~\eqref{E:ne1} and ~\eqref{E:ne2} we get
 \begin{equation}
 ne(N_2)-ne(N)=ne(e_1,N)=ne(e_1,N_2)
 \end{equation}
This together with~\eqref{E:clear} gives $ne(N)=ne(N_1)$.

In the third case, similarly, denote by $r(f_i)$ the edge in $N$
that ends with vertex $r_i$, $i=1,\dots, p+s$, by $r(e_2)$ the
edge that ends with the vertex $r-1$, and for every other edge $e$
in $N_2$, denote by $r(e)$ the edge in $N$ that corresponds to $e$
in the natural way. In $N_2$, define $a$ to be the number of edges
below $e_1$ and crossing $e_2=(2,q)$ and $b$ to be the number of
those edges below $e_1$ with a left endpoint right of $q$.  In
what follows, $v_i$ are the vertices defined in case 3 of the
definition of $\phi$. Then
\begin{align}
st_1(M)&=1+a+2b+s \label{E:1}\\
ne(r(e_2),N)&= \left| \{\text{edges in $N_2$ below $e_1$ with left
endpoint between
$v_{s+1}$ and $r$}\}\right| \label{E:2}\\
ne(N_2)&=ne(N_1)+ne(e_2,N_2)+1+a+b \label{E:first}\\
ne(N_2)&=ne(e_1,N_2)+ne(e_2,N_2)+\sum_{i=1}^{p+s}{ne(f_i,N_2)}+\sum_{e
\notin
\{e_1,e_2,f_1,\dots, f_{p+s}\}}{ne(e,N_2)} \label{E:second}\\
ne(N)&=ne(e_1,N)+ne(r(e_2),N)+
\sum_{i=1}^{p+s}{ne(r(f_i),N)}+\sum_{e \notin \{e_1,e_2,f_1,\dots,
f_{p+s}\}}{ne(r(e),N_2)} \label{E:third}
\end{align}
To complete the proof, we need to distinguish two cases: $ s\geq
p$ and $p>s$. When $ s\geq p$, close inspection of the
"rearrangement" of the edges reveals:
\begin{align}\label{E:3}
&ne(r(f_i),N)-ne(f_i,N_2)= \notag \\
&=
\begin{cases}
1,  & 1 \leq i \leq p\\
1+ \left| \{\text{edges in $N_2$ below $e_1$ with left vertex
between
$v_i$ and $v_{i+1}$}\}\right|, & p < i \leq s \\
0, & s < i \leq p+s
\end{cases}
\end{align}
while when $p>s$, similar equalities hold:
\begin{align}\label{E:4}
& ne(r(f_i))-ne(f_i)= \notag \\
&=
\begin{cases}
1,  & 1 \leq i \leq s\\
-\left| \{\text{edges in $N_2$ below $e_1$ with left vertex
between $v_i$ and $v_{i+1}$}\}\right|, & s < i \leq p\\
0, & p <  i \leq p+s
\end{cases}
\end{align}
Now, we add the equations~\eqref{E:first} and ~\eqref{E:third} and
subtract~\eqref{E:second} from them. Using \eqref{E:1},
\eqref{E:2}, and \eqref{E:3},i.e.,~\eqref{E:4}, we
get~\eqref{E:need}.
\end{proof}

\subsection{Some properties of $\Phi$}

First we need few definitions. We say that $\{l,l+1,\dots,k\}$ is
a component of a matching $M \in \mathcal{M}_n$ if the
restrictions of $M$ on each of the sets $\{1,\dots,l-1\}$,
$\{l,l+1, \dots,k \}$, and $\{ k+1 ,\dots, n\}$ are matchings
themselves. A matching is called irreducible if it has only one
component. In terms of diagrams, a matching is irreducible if it
cannot be split by vertical bars into disjoint matchings.

A component of a path $P \in \mathcal{P}_n$ is a subsequence of
consecutive steps beginning at $(l,-l)$ and ending at $(k,-k)$
such that both parts of $P$ between $(l,-l)$ and $(k,-k)$, and
between $(k,-k)$ and $(n,-n)$ when translated by the appropriate
vector to the origin represent paths in $\mathcal{P}_{k-l}$ and
$\mathcal{P}_{n-k}$ respectively. A component which does not have
nontrivial subcomponents is called irreducible.

\begin{proposition} For $P \in \mathcal{P}_n$ the following are
true:
\begin{itemize}
\item[(a)] $P$ has $k$ south steps on the line $x=n$ if and only
in $\Phi(P)$, $1$ is connected to $k+1$ .

\item[(b)] The  irreducible components of $P$ read backwards are
in one-to-one correspondence with the irreducible components of
$\Phi(P)$ from left to right.
\end{itemize}
\end{proposition}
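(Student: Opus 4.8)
The plan is to dispatch (a) by a direct computation and (b) by analysing $\psi$ and $\phi$ separately; the point is that $\psi$ reverses the order of components while $\phi$ preserves the decomposition of a matching into irreducible components.

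For (a): since $P$ ends on the line $y=-x$, after its last ($n$-th) east step, at height $a_n$, the path can only descend from $(n,a_n)$ to $(n,-n)$, so the number of south steps of $P$ on the line $x=n$ is exactly $a_n+n=b_1$. By the construction of $\psi$, vertex $1$ is joined to the $(b_1+1)$-st vertex in $\psi(P)$, and since $\phi$ preserves the first edge this remains so in $\Phi(P)$; hence $1$ is connected to $k+1$ in $\Phi(P)$ iff $k=b_1$, which is (a).

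For the $\psi$-half of (b): if $(j,-j)$ lies on $P$, then the part of $P$ after $(j,-j)$, translated to the origin, has east-step heights $a_{j+1}+j,\dots,a_n+j$, and the substitution defining $b$ from $a$ shows that its own sequence $b_1,\dots,b_{n-j}$ is literally the initial segment of length $n-j$ of the $b$-sequence of $P$. Hence $(j,-j)$ is a point where $P$ splits into a path in $\mathcal{P}_j$ followed by a path in $\mathcal{P}_{n-j}$ exactly when $b_1,\dots,b_{n-j}$ satisfies the inequalities $b_i\le 2(n-j+1-i)-1$ characterizing the $b$-sequences of $\mathcal{P}_{n-j}$. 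On the matching side I would show, by induction on $n$ using that deleting the last east step of $P$ amounts to deleting the first edge $e_1$ of $\psi(P)$ and shifting the $b$-sequence, that the first $m$ edges of $\psi(P)$ occupy exactly $\{1,\dots,2m\}$ — equivalently, $\{1,\dots,2m\}$ is a union of irreducible components of $\psi(P)$ — if and only if $b_1,\dots,b_m$ satisfies $b_i\le 2(m+1-i)-1$. Comparing these two statements, $j\mapsto 2(n-j)$ is an order-reversing bijection from the set of splitting positions of $P$ onto that of $\psi(P)$, so that an irreducible component of $P$ with $\ell$ east steps corresponds to an irreducible component of $\psi(P)$ with $\ell$ edges, the left-to-right order being reversed.

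For the $\phi$-half of (b): I would prove by induction on $n$ that if $M=M^{(1)}\mid\cdots\mid M^{(r)}$ is the decomposition into irreducible components, then $\phi(M)=\phi(M^{(1)})\mid\cdots\mid\phi(M^{(r)})$ with each $\phi(M^{(s)})$ irreducible on a block of the same size. The first edge $e_1$ lies in $M^{(1)}$; every edge crossing $e_1$ (resp.\ crossing both $e_1$ and $e_2$) has both endpoints in the vertex block of $M^{(1)}$, and the vertex inserted in Cases 2 and 3 sits just before the right endpoint of $e_1$, hence also in that block, so every operation building $\phi(M)$ — the recursive call on $M\setminus e_1$, the reinsertion of $e_1$, and the Case 1/2/3 rearrangement — stays inside the $M^{(1)}$-block; thus $\phi(M)$ splits after that block as $\phi(M^{(1)})\mid\phi(M^{(2)}\mid\cdots\mid M^{(r)})$ and the inductive hypothesis finishes the tail. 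What remains — and this is the main obstacle — is to check that $\phi$ sends an irreducible matching to an irreducible matching: for irreducible $M$ with $n\ge 2$ one has $e_1=(1,r)$ with $r\ge 3$ (so Case 1 is excluded) and $M$ has an edge crossing $e_1$, and one must verify that the rearrangements in Cases 2 and 3 never create a vertex $v<2n$ with no edge straddling it. Establishing this cleanly requires unwinding those rearrangements: the key inputs are that an irreducible matching always has an edge crossing its first edge, and that the rearrangement merely redistributes the interior left endpoints among right endpoints lying beyond $r$, so no initial block ever gets closed off. Granting this, $\Phi=\phi\circ\psi$ has the same irreducible-component structure as $\psi(P)$, and (b) follows.
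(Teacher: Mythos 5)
Your overall route coincides with the paper's: part (a) is reduced to the definition of $\psi$ plus the fact that $\phi$ preserves the first edge, and part (b) is reduced to the corresponding statement for $\psi$ together with the claim that $\phi$ carries the irreducible components $C_1,\dots,C_k$ of $\psi(P)$ to the irreducible components $\phi(C_1),\dots,\phi(C_k)$ of $\Phi(P)$. Your computation for (a) (the number of south steps on $x=n$ is $a_n+n=b_1$, and vertex $1$ is joined to vertex $b_1+1$) and your $b$-sequence analysis for the $\psi$-half of (b) are correct and considerably more detailed than the paper's one-line assertions; they are genuine proofs of steps the paper merely calls clear.

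The one real gap is the step you yourself flag and then ``grant'': that $\phi$ sends an irreducible matching to an irreducible matching. Your block-preservation argument ($\phi(M^{(1)}\mid\cdots\mid M^{(r)})=\phi(M^{(1)})\mid\cdots\mid\phi(M^{(r)})$, because the recursive call, the reinsertion of $e_1$, and the case 2/3 rearrangements all take place inside the first block) only shows that components cannot merge; it does not rule out an irreducible component splitting into several, which would destroy the one-to-one correspondence in (b). The paper offers no argument here either, but you do not need to unwind the rearrangements to close this. Since $\phi$ is a bijection of $\mathcal{M}_m$ and, by your block-preservation claim, maps the finite set $B_c$ of matchings that decompose according to a fixed block composition $c$ into itself, injectivity forces $\phi(B_c)=B_c$ for every $c$; hence $\phi$ also preserves each set $A_c=B_c\setminus\bigcup_{c'\prec c}B_{c'}$ of matchings whose irreducible decomposition is exactly $c$, and in particular it maps irreducible matchings to irreducible matchings. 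With that observation inserted your proof is complete. (A minor bookkeeping point: when you invoke the inductive hypothesis on $M\setminus e_1$, its first irreducible component need not be $M^{(1)}\setminus e_1$, since the latter may itself be reducible; you need one further application of the hypothesis, or a version of the statement for arbitrary block decompositions, to identify the first block of $\phi(M\setminus e_1)$ with $\phi(M^{(1)}\setminus e_1)$.)
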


\begin{proof}
\begin{itemize}
\item[(a)] From the definition of $\psi$, it is clear that $P$ has
$k$ south steps on the line $x=n$ if and only in $\psi(P)$, $1$ is
connected to $k+1$. Thus, the claim follows from the fact that
$\phi$ preserves the first edge.

\item[(b)] This statement is clearly true if we replace $\Phi$ by
$\psi$. Hence, it suffices to observe that if the irreducible
components of $\psi(P)$ are $C_1, \dots, C_k$, then $\phi(C_1),
\dots, \phi(C_k)$ are the irreducible components of $\Phi(P)$.
\end{itemize}
\end{proof}

\begin{proposition}
If $P$ is a path with no north steps (Dyck path) then $M=\Phi(P)$
is the unique matching with no nestings such that $i$ is a left
endpoint in $M$ exactly when the $(2n+1-i)$-th step of $P$ is a
south step.

\noindent In other words, the set of left and right endpoints  of
$M$ is determined by $P$ traced backwards.
\end{proposition}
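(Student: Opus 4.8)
The plan is to read off $\Phi(P)$ explicitly when $P$ has no north steps. Since $P$ has no north steps, the main theorem gives $ne(\Phi(P))=0$, so $M=\Phi(P)$ is nonnesting. A nonnesting matching is determined by its set of left endpoints: if the edges are listed so that their left endpoints increase, then having no nesting is equivalent to the right endpoints increasing too, so the $j$-th smallest left endpoint is matched with the $j$-th smallest right endpoint. This makes the uniqueness assertion automatic, and reduces the statement to computing the left-endpoint set $L(M)$ and checking that it equals $\{\, i : \text{the $(2n+1-i)$-th step of $P$ is a south step}\,\}$.

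The first part of the argument is that $\Phi(P)=\psi(P)$ for a Dyck path $P$, i.e.\ that $\phi$ contributes nothing. By the Lemma, $st(\psi(P))$ equals the number of north steps of $P$, which is $0$, so $st_i(\psi(P))=0$ for every $i$; but $st_i(M)\ge 1$ whenever the $i$-th and $(i+1)$-st edges of $M$ are nested, since the right endpoint of the $(i+1)$-st edge is always among the counted vertices. Hence $\psi(P)$ has no two consecutive nested edges, and a matching with that property has no nestings at all: choosing a nested pair $e_i\supset e_j$ with $i<j$ and $j-i$ minimal, if $j>i+1$ then comparing $e_{i+1}$ (whose left endpoint lies strictly between those of $e_i$ and $e_j$) with $e_i$ and with $e_j$ forces, in every case, either $e_i\supset e_{i+1}$ or $e_{i+1}\supset e_j$, contradicting minimality. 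So $\psi(P)$ is nonnesting. Finally, $\phi$ is the identity on nonnesting matchings: arguing by induction on the number of edges, with the notation of the definition of $\phi$ one has $N_1=M_1$, hence $N_2=M$; since $M$ is nonnesting $e_1$ and $e_2$ are not nested, so case~3 does not occur, case~1 gives $N=M$ directly, and in case~2 the nonnesting hypothesis forces every vertex strictly inside $e_1=(1,r)$ to be a left endpoint and every edge with left endpoint there to reach beyond $r$, so that $l_i=i$ for $i=2,\dots,k=r-1$ and $r_i>r$; a direct check then shows the rearrangement and renumbering of case~2 reproduce $M$. Therefore $\Phi(P)=\psi(P)$.

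The second part computes $L(\psi(P))$ by induction on $n$. Write the step word of $P$ as $E\,S^{c_1}E\,S^{c_2}\cdots E\,S^{c_n}$; then $c_i\ge 0$, $\sum_i c_i=n$, every partial sum $c_1+\cdots+c_i$ is $\le i$, and in particular $c_{n-1}+c_n\ge 2$. Since $b_1=c_n$, the first edge of $\psi(P)$ is $(1,c_n+1)$; deleting it and renumbering gives $\psi(P')$, where $P'$ is the Dyck path in $\mathcal P_{n-1}$ with step word $E\,S^{c_1}\cdots E\,S^{c_{n-2}}E\,S^{c_{n-1}+c_n-1}$, the inequality $c_{n-1}+c_n\ge 2$ being exactly what guarantees this is a legitimate path. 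Equivalently, $\psi(P)$ arises from $\psi(P')$ by inserting a new vertex $1$, which becomes a left endpoint, together with its partner, a new right endpoint, in position $c_n+1$, all other vertices shifting by $1$ (if originally $\le c_n-1$) or by $2$ (if originally $\ge c_n$). The same operation on the reversed step word of $P'$ --- prepend an $S$, then insert an $E$ in position $c_n+1$ --- turns $S^{c_{n-1}+c_n-1}E\cdots$ into $S^{c_n}E\,S^{c_{n-1}}E\cdots$, which is precisely the reversed step word of $P$. So the inductive hypothesis for $P'$ (whose reversed step word records its left endpoints) gives the conclusion for $P$; the base case $n=1$ is $\psi(P)=\{(1,2)\}$, with reversed step word $SE$.

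The main obstacles I anticipate are two. First, the verification that case~2 of $\phi$ acts as the identity on nonnesting matchings: the crux is the combinatorial fact that in a nonnesting matching the left endpoints of the edges crossing the first edge fill out the whole block $\{2,\dots,r-1\}$, after which one must track the rearrangement carefully through the deletion of vertex $2$. Second, the bookkeeping in the inductive step --- in particular confirming that the reduced path $P'$ is still a genuine element of $\mathcal P_{n-1}$ (which is where $c_{n-1}+c_n\ge 2$, equivalently $a_{n-1}\ge -(n-2)$, is needed) and that the index shifts on the vertices of $\psi(P)$ match exactly the ``prepend $S$, insert $E$'' operation on reversed step words.
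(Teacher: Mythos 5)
Your proposal is correct and follows the same route as the paper's (very terse) proof: show that the left/right-endpoint description holds already for $\psi(P)$, and that $\phi$ fixes $\psi(P)$ because it is nonnesting. You simply supply in full the details the paper leaves to the reader — that $st(\psi(P))=0$ forces $\psi(P)$ to be nonnesting, that $\phi$ acts as the identity on nonnesting matchings, and the inductive computation of the left-endpoint set — all of which check out.
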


\begin{proof}
It follows from the definition of $\psi$ that  the statement is
true for $\psi(P)$. Moreover, since $\psi(P)$ has no nestings,
$\phi$ leaves $\psi(P)$ unchanged.
\end{proof}

\section*{Acknowledgment}

The author would like to thank Catherine Yan for suggesting this
problem, which was posed by A.~Rechnitzer at the 19-th FPSAC in
July, 2007.

\textsc{Svetlana Poznanovi\'{c}: Department of Mathematics, Texas
A\&M
University, College Station, TX 77843 USA} \\
\emph{E-mail address}: \verb"spoznan@math.tamu.edu"


\end{document}